\theoremstyle{plain}
\newtheorem{thm}{Theorem}[section]
\newtheorem{cor}[thm]{Corollary}
\newtheorem{prop}[thm]{Proposition}
\newtheorem*{thm*}{Theorem}
\newtheorem*{problem*}{Problem}
\newtheorem*{cor*}{Corollary}
\theoremstyle{definition}
\newtheorem{defn}[thm]{Definition}
\newtheorem{rem}[thm]{Remark}
\newtheorem{ex}[thm]{Example}
\numberwithin{equation}{section}
\newcommand{\Spec}{\operatorname{Spec}}
\newcommand{\Z}{{\mathbb Z}}
\newcommand{\pone}{{\mathbb P}^1}
\newcommand{\SB}{\operatorname{SB}}
\newcommand{\gm}{{{\mathbb G}_{m}}}
\newcommand{\gr}{\operatorname{Gr}}
\newcommand{\ind}{\operatorname{ind}}
\begin{document}

\title[$K_1$-zero-cycles on certain symplectic involution varieties]{Zero-cycles with coefficients for the second generalized symplectic involution variety of an algebra of degree 4}

\author{Patrick K. McFaddin}
\address{Department of Mathematics, University of South Carolina, 
Columbia, SC 29208}
\email{pkmcfaddin@gmail.com}
\urladdr{\url{http://mcfaddin.github.io/}}
\keywords{Algebraic cycles; $K$-cohomology; Severi-Brauer varieties; Central simple algebras; Algebraic groups; Homogeneous varieties}

\subjclass[2010]{Primary \href{http://www.ams.org/msc/msc2010.html?t=16K20&s=14c25&btn=Search&ls=Ct}{14C25}, Secondary \href{http://www.ams.org/msc/msc2010.html?t=&s=16k20&btn=Search&ls=s}{16K20}}

\maketitle
\addtocounter{section}{0}

\begin{abstract}
We compute the group of $K_1$-zero-cycles on the second generalized involution variety for an algebra of degree 4 with symplectic involution.  This description is given in terms of the group of multipliers of similitudes associated to the algebra with involution.  Our method utilizes the framework of Chernousov and Merkurjev for computing $K_1$-zero-cycles in terms of $R$-equivalence classes of prescribed algebraic groups. This gives a computation of $K_1$-zero-cycles for some homogeneous varieties of type $\mathsf{C}_2$.
\end{abstract}

\section{Introduction}

The formalism of ``Chow groups with coefficients," as developed by Markus Rost \cite{Rost}, has become a useful tool for investigating intersection-theoretic questions in arithmetic and algebraic geometry. This theory has bolstered the study of algebraic cycles by providing a broader context and more desirable functoriality properties than those found for classical Chow groups.  Given a variety $X$ defined over a field $F$, there is a complex \cite{Quillen, Rost}$$\cdots \to \coprod _{x \in X_{(p+1)}} K_{p+q+1}(F(x)) \to \coprod _{x \in X_{(p)}} K_{p+q}(F(x)) \to \coprod_{x \in X_{(p-1)}} K_{p+q-1} (F(x)) \to \cdots $$ where $X_{(i)}$ is the set of points of $X$ of dimension $i$, $K_i$ are Quillen $K$-groups, and the differentials are given component-wise by residue and corestriction homomorphisms.  The $p^{\text{th}}$ homology group of this complex, i.e., homology at the middle term above, is denoted $A_p(X, K_q)$, and classical Chow groups in the sense of \cite{Fulton} are recovered via the identification $\text{CH}_p(X) = A_p(X, K_{-p})$.  The study of these $K$-cohomology groups for homogenous varieties has seen many useful applications to Galois cohomology, and significant results include the Merkurjev-Suslin Theorem \cite{MS82} and the Milnor and Bloch-Kato Conjectures \cite{Voe}.  Unfortunately, general descriptions of these groups remain elusive, and computations are done in various cases, described below.

Algebraic cycles have also found a great deal of utility in the study of algebraic groups.  In \cite{ChernMerkSpin}, Chernousov and Merkurjev show that for certain groups $G$ and appropriately chosen $G$-homogenous varieties $X$, the group $A_0(X, K_1)$ encodes arithmetic properties of $G$ that can be used to investigate rationality by means of $R$-equivalence.  Such results allow for the utilization of algebraic $K$-theory and $K$-cohomology in the analysis of such questions.  In the present paper, we focus on the reverse implication. Our main result does not shed light on the rationality problem for the symplectic groups (shown to be rational in \cite{Weil, ChernMerkSU}), but crucially utilizes the Chernousov-Merkurjev framework to identify $K$-cohomology groups.

Computing groups of zero-cycles (with coefficients) on homogeneous varieties has been taken up by a number of authors.  The groups of $K_0$-zero-cycles, i.e., Chow groups $\text{CH}_0(X)$, were computed for e.g.  some (generalized) Severi-Brauer varieties, (generalized) involution varieties, Severi-Brauer flag varieties, and varieties of exceptional type in \cite{ChernMerk, Karpenko, KrashZeroCycles, Mer95, PaninThesis, PSZ, Swan}.


For $K_1$-zero-cycles, many cases remain unknown. Let $G$ be a simply connected group of inner type $\mathsf{A}_n$, so that $G  = \textbf{SL}_1(A)$ for a central simple algebra $A$ over $F$.  For any $1 \leq k \leq \text{deg}(A)$, the (generalized) Severi-Brauer varieties $X= \SB_k(A)$ associated to $A$ are homogeneous for $G$ and their $K_1$-zero-cycles have been computed in few cases.  For $X$ the Severi-Brauer variety of $A$ (i.e., $k=1$),  Merkurjev and Suslin have shown that $A_0(X, K_1) \simeq K_1(A)$ \cite{MerkSus}. For $X$ the second generalized Severi-Brauer variety (i.e., $k=2$) of an algebra $A$ of index 4, it was shown by the author that $A_0(X, K_1) \simeq \{(a, \lambda) \in K_1(A) \times F^{\times} \mid \text{Nrd}_A (a) = \lambda ^2\}$ \cite{McFaddin}.

For simply connected groups of types $\mathsf{B}_n$ and $\mathsf{D}_n$, Chernousov and Merkurjev provide a description of the $K_1$-zero-cycles on the involution varieties associated to quadratic spaces and their twisted forms.  Indeed, it was shown in \cite{ChernMerkSpin} that for $X$ the projective quadric defined by a quadratic form $q$ on a vector space $V$, there is a canonical isomorphism $A_0(X, K_1) \simeq \Gamma ^{+}(V, q)/ R\text{Spin}(V, q)$.  If $X$ is the involution variety corresponding to an algebra $A$ of index no greater than 2 with quadratic pair $(\sigma, f)$, there is an isomorphism $A_0(X, K_1) \simeq \Gamma ^{+}(A, \sigma, f) /R \text{Spin}(A, \sigma, f)$.  Note that this work also recovers a theorem of Rost showing that $\overline{A}_0(X, K_1)=0$ for $X$ the projective quadric of a Pfister form \cite{Rost88}, where $\overline{A}_0(X, K_1)$ denotes the kernel of the pushforward $A_0(X, K_1) \to K_1(F)$ along the structure morphism $X \to \Spec F$. This fact was used in the proof of the Bloch-Kato Conjecture \cite{Voe}.

In the present paper, we treat type $\mathsf{C}_2$.  Such groups are given by symplectic groups $\textbf{Sp}(A, \sigma)$ for a central simple algebra of degree $4$ with symplectic involution.  The second generalized involution variety $\text{IV}_2(A, \sigma)$ is homogeneous for $\textbf{Sp}(A, \sigma)$ and we compute its group of $K_1$-zero-cycles.  Our main result is the following: 

\begin{thm*}[\ref{thm:mainthm}]
Let $(A, \sigma)$ be a central simple algebra over $F$ of degree 4 with symplectic involution and let $X = \operatorname{IV}_2(A, \sigma)$ be the second generalized involution variety associated to $(A, \sigma)$.  Then $A_0(X, K_1)$ is given by the group of multipliers of similitudes of $(A, \sigma)$, i.e., there is a group isomorphism $$A_0(X, K_1) \simeq G(A, \sigma):= \{\sigma(a) \cdot a \mid a \in A^{\times} \operatorname{ satisfies } \sigma(a) \cdot a \in F^{\times}\}.$$ Moreover, $\overline{A}_0(X, K_1) =0$.
\end{thm*}

In the case where $A$ is split, i.e., $A \simeq \text{End}_F(V)$ for an $F$-vector space $V$, the involution $\sigma$ is adjoint to an alternating bilinear form $b$ on $V$ and $\text{IV}_2(A, \sigma)$ is the second isotropic (or symplectic) Grassmannian $\text{SGr}(2, V)$ consisting of planes in $V$ which are $b$-isotropic. In this case, we obtain the following description of the group of $K_1$-zero-cycles:

\begin{cor*}[\ref{cor:split}]
Let $(A, \sigma)$ be a central simple algebra of degree 4 with symplectic involution and let $X = \operatorname{IV}_2(A, \sigma)$.  If $A$ is split, then $A_0(X, K_1) \simeq F^{\times}$.
\end{cor*}

\subsection*{Organization}

In Section~\ref{section:prelim} we define a slew of homogeneous varieties and recall some structure results.  We then define cycle modules and $K$-cohomology groups.  In Section~\ref{section:alggrp} we present the main algebraic groups of interest: spin, Clifford, (general) symplectic.  We recall some exceptional identifications of groups of type $\mathsf{B}_2$ and $\mathsf{C}_2$ and their algebraic invariants.  We define $R$-triviality and the index of an algebraic group, and present the framework of Chernousov and Merkurjev describing the group of $K_1$-zero-cycles in terms of $R$-equivalence classes.  In Section~\ref{section:Requiv}, we present our main result for symplectic involution varieties of algebras of degree 4.

\section{Preliminaries}\label{section:prelim}
Throughout, $F$ denotes an infinite perfect field of arbitrary characteristic.  If $L/F$ is a field extension and $A$ is an $F$-algebra, we write $A_L:= A \otimes _F L$.  By a \emph{scheme} we will mean a separated scheme of finite type over the field $F$.  A variety is an integral scheme.  If $X$ and $Y$ are $F$-varieties, we write $X_Y : = X\times _{\Spec F} Y$.

A \emph{central simple algebra} over $F$ is a finite-dimensional $F$-algebra with no two-sided ideals other than $(0)$ and $(1)$ and whose center is precisely $F$.  Recall that the dimension of a central simple algebra $A$ is a square, and we define the \emph{degree} of $A$ to be $\text{deg}(A) = \sqrt{\dim A}$.  One may write $A = M_n(D)$ for a division algebra $D$, unique up to isomorphism, and we define the \emph{index} of $A$ to be $\text{ind}(A) = \text{deg}(D).$  The \emph{period} (or exponent) of an algebra $A$ is its order in the Brauer group of $F$.   


An \emph{involution} $\sigma$ on a central simple algebra $A$ is an anti-automorphism satisfying $\sigma \circ \sigma = \text{id}_A$.  An involution is \emph{of the first kind} if $\sigma |_{F} = \text{id}_F$.  Upon extending scalars to a separable closure, involutions become adjoint to bilinear forms.  Involutions which are adjoint to an alternating bilinear form (respectively, symmetric bilinear form) over a separable closure of $F$ are \emph{symplectic} (respectively, \emph{orthogonal}).  Note that an algebra with symplectic involution necessarily has even degree \cite[Prop 2.6]{MerkBook}.  For an algebra with involution of the first kind $(A, \sigma)$, the set of \emph{symmetrized elements} is $$\text{Symd}(A, \sigma) = \{a + \sigma (a) \mid a\in A\}.$$

\subsection{Severi-Brauer and Involution Varieties}

We refer the reader to \cite{Blanch, MerkBook, KrashZeroCycles, Tao} for a full treatment of the following material.  Throughout this section, let $A$ be a central simple algebra of degree $n$.  Involutions on $A$ are assumed to be of the first kind, either orthogonal or symplectic.

\begin{defn}
For a (right or left) ideal $I \subset A$, the \emph{reduced dimension} of $I$ is $$\text{rdim}(I) = \frac{\text{dim}_F I}{n}.$$
\end{defn}

\begin{defn} For any integer $1 \leq k \leq n$, the $k^{\text{th}}$ \emph{generalized Severi-Brauer variety} $\SB_k(A)$ of $A$ is the variety of right ideals of dimension $nk$ (or reduced dimension $k$) in $A$.   
\end{defn}

Such a varietiy is a twisted form of the Grassmannian $\gr(k, n)$ of $k$-dimensional subspaces of an $n$-dimensional vector space.  The variety $\SB_1(A) = \SB(A)$ is the usual Severi-Brauer variety of $A$, which is a twisted form of projective space $\mathbb{P}^{n-1} = \gr(1, n)$.  For a field extension $L/F$, the variety $\SB_k(A)$ has an $L$-rational point if and only if $\text{ind}(A_L) \mid k$ \cite[Prop. 1.17]{MerkBook}.

\begin{defn}  Let $(A, \sigma)$ be a central simple algebra with involution $\sigma$ (of the first kind, either orthogonal or symplectic).  A right ideal $I$ is $\sigma$-\emph{isotropic} if $\sigma(I) \cdot I = (0)$.  For any integer $1 \leq k \leq n$, the $k^{\text{th}}$ \emph{generalized involution variety} $\text{IV}_k(A, \sigma)$ is the variety of $\sigma$-isotropic right ideals of reduced dimension $k$ \cite[$\S$8]{KrashZeroCycles}.  
\end{defn}

Forgetting the isotropy condition defines an embedding $\text{IV}_k(A, \sigma) \hookrightarrow \SB _k(A).$  In the case where $k = 1$ and $\sigma$ is orthogonal, the above embedding realizes $\text{IV}_1(A, \sigma)$ as a twisted projective quadric.  This variety was first defined and studied in \cite{Tao}.

For algebras of degree 4, exceptional identifications allow us to recognize additional structure via certain exceptional identifications (see Section~\ref{section:excid}).  A geometric consequence is given by the following result of Krashen.

\begin{prop}[\cite{KrashZeroCycles}, Prop. 8.10]\label{prop:quadhyper}
Suppose $(A, \sigma)$ is a degree 4 central simple algebra with symplectic involution over a field $F$ with $\operatorname{char}(F) \neq 2$.  Then $\operatorname{IV}_2(A, \sigma)$ is isomorphic to a quadric hypersurface in $\mathbb{P}^4$.
\end{prop}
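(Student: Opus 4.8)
The plan is to use the exceptional isomorphism in type $\mathsf{C}_2 = \mathsf{B}_2$ to identify the symplectic involution variety $\operatorname{IV}_2(A,\sigma)$ of a degree $4$ algebra with an orthogonal involution variety, which in turn is a twisted quadric. Recall that a central simple algebra of degree $4$ with symplectic involution $(A,\sigma)$ corresponds, under the exceptional isomorphism $\mathsf{C}_2 \cong \mathsf{B}_2$, to a $5$-dimensional quadratic space, or equivalently to a $6$-dimensional quadratic space via the associated Clifford/discriminant data. Concretely, the even Clifford algebra construction attaches to $(A,\sigma)$ a quadratic form $q$ of rank $5$ whose projective quadric is the variety of $\sigma$-isotropic ideals of reduced dimension $2$. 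So first I would invoke this exceptional identification (to be set up in Section~\ref{section:excid}) to produce a rank-$5$ or rank-$6$ quadratic space $(V,q)$ with a natural map from $\operatorname{IV}_2(A,\sigma)$ into $\mathbb{P}(V)$.

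Next I would verify that under this correspondence the $\sigma$-isotropy condition cutting out $\operatorname{IV}_2(A,\sigma) \subset \SB_2(A)$ translates exactly into the vanishing of the quadratic form $q$, so that the image is precisely the projective quadric $\{q = 0\} \subset \mathbb{P}^4$. The key computation is a dimension and degree count: the variety $\SB_2(A)$ is a twisted form of the Grassmannian $\gr(2,4)$, which is a quadric hypersurface in $\mathbb{P}^5$ under the Pl\"ucker embedding; imposing the single $\sigma$-isotropy relation cuts this down to a quadric hypersurface in $\mathbb{P}^4$. I would check that the $\sigma$-isotropic condition $\sigma(I)\cdot I = (0)$ is a single quadratic (linear-in-Pl\"ucker-coordinates) constraint, intersecting the Pl\"ucker quadric in a codimension-one subvariety that is again a quadric of dimension $3$ sitting in $\mathbb{P}^4$.

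The main obstacle will be making the identification canonical and geometric rather than merely numerical: one must exhibit an explicit isomorphism of the scheme $\operatorname{IV}_2(A,\sigma)$ with a quadric $\{q=0\}\subset \mathbb{P}^4$, not just match dimensions and degrees. This requires using the exceptional isomorphism at the level of algebraic groups and the functor of points — translating $\sigma$-isotropic ideals of reduced dimension $2$ into isotropic lines for $q$ — and checking that the resulting form $q$ is the expected one arising from the even Clifford algebra of $(A,\sigma)$. Since $A$ has degree $4$ and $\operatorname{char}(F)\neq 2$, I would appeal to the classification of degree-$4$ symplectic algebras via their associated quadratic forms (the norm form of the quaternionic data), verifying that the quadric hypersurface is anisotropic exactly when $A$ is division, consistent with $\operatorname{IV}_2$ having a rational point iff $\operatorname{ind}(A)\mid 2$.

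Finally, because the statement is attributed to Krashen and only asserts the existence of such an isomorphism in this degree-$4$, $\operatorname{char}\neq 2$ setting, I would present the argument as an application of the exceptional-isomorphism machinery rather than a from-scratch construction, citing the structure of $\gr(2,4)$ as a quadric and reducing the problem to the single extra isotropy relation. The cleanest route is likely the even Clifford algebra functor, which directly produces the quadratic space governing the $\mathsf{B}_2$-side and hence the target quadric in $\mathbb{P}^4$.
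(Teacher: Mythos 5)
Your proposal takes essentially the same route as the paper: the exceptional $\mathsf{B}_2 = \mathsf{C}_2$ identification, under which $\SB_2(A)$ becomes the quadric $\{\operatorname{Nrp}_{\sigma} = 0\} \subset \mathbb{P}(\operatorname{Symd}(A,\sigma)) \cong \mathbb{P}^5$ (the twisted Pl\"ucker picture you describe) and the $\sigma$-isotropy condition becomes the linear section $\{\operatorname{Trp}_{\sigma} = 0\}$, yielding $\operatorname{IV}_2(A,\sigma) \cong \{s_{\sigma} = 0\} \subset \mathbb{P}(\operatorname{Symd}(A,\sigma)^0) \cong \mathbb{P}^4$. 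One small correction: the concrete functor from $\mathsf{C}_2$ to $\mathsf{B}_2$ is not the even Clifford algebra construction (that functor goes from $\mathsf{B}_2$ to $\mathsf{C}_2$), but rather $(A,\sigma) \mapsto (\operatorname{Symd}(A,\sigma)^0, s_{\sigma})$, the trace-zero symmetrized elements equipped with the Pfaffian norm form.
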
 

We explicitly identify this quadric hypersurface in terms of $\sigma$-symmetrized elements in $A$ below (see Remark~\ref{rem:quadhyp}).

\subsection{Cycle Modules and Homology}

Cycle modules were introduced by M. Rost in \cite{Rost} and good references are \cite{EKM, GMS}.
\begin{defn} 
A \emph{cycle module} $M$ over $F$ is a function assigning to every field extension $L/F$ a graded abelian group $M(L) = M_*(L)$, which is a graded module over the Milnor $K$-theory ring $K^M_*(F)$ satisfying certain data and compatibility axioms.  This data includes
\begin{enumerate}
\item For each field homomorphism $L \to E$ over $F$, there is a degree 0 homomorphism $r_{E/L}: M(L) \to M(E)$ called \emph{restriction}.
\item For each field homomorphism $L \to E$ over $F$, there is a degree 0 homomorphism $c_{E/L}: M(E) \to M(L)$ called \emph{corestriction} (or norm).
\item For each extension $L/F$ and each (rank 1) discrete valuation $v$ on $L$, there is a degree $-1$ homomorphism $\partial _{v}: M(L) \to M(\kappa(v))$ called the \emph{residue homomorphism}, where $\kappa(v)$ is the residue field of $v$.
\end{enumerate}
These homomorphisms are compatible with the corresponding maps in Milnor $K$-theory (see D1-D4, R1a-R3e of \cite[Def. 1.1]{Rost}).  For an $F$-scheme $X$, we say that a cycle module $M$ over $F$ is a cycle module on $X$ if it further satisfies properties FD and C of \cite[Def. 2.1]{Rost} concerning those extension of $F$ which arise as residue fields of points of $X$.
\end{defn}

Let $X$ be an $F$-variety and $M$ a cycle module on $X$.  Utilizing this axiomatic framework and setting $$C_p(X, M_q) = \coprod _{x \in X_{(p)}} M_{p+q}(F(x)),$$ one obtains a complex $$ \cdots \xrightarrow{} C_{p+1}(X, M_{q}) \xrightarrow{d_X} C_p(X, M_q) \xrightarrow{d_X} C_{p-1}(X, M_{q}) \xrightarrow{} \cdots$$  with differentials $d_X$ induced by the homomorphisms $\partial _v$ associated to discrete valuations of subvarieties of codimension 1.  This is often referred to as the \emph{Rost complex} \cite{EKM} or \emph{Gersten complex} \cite{Panin}.  We denote the homology group at the middle term by $A_p(X, M_q)$.  Our main focus will be the group $$A_0(X, K_1) = \operatorname{coker} \left( \coprod _{x\in X_{(1)}} K_2(F(x))\xrightarrow{d_X} \coprod _{x \in X_{(0)} } K_1(F(x)) \right)$$ of $K_1$-zero-cycles, where $K_*$ is the cycle module given by Quillen or Milnor $K$-theory, as these coincide in low degree.  References for Quillen $K$-theory include \cite{Srinivas, Kbook} and for Milnor $K$-theory see \cite{GS06}.  The group of $K_1$-zero-cycles is therefore given by equivalences classes of expressions $\sum (\alpha, x)$, where $x$ is a closed point on $X$ and $\alpha$ and element of $F(x)^{\times} = K_1(F(x))$.  Note that the group $A_0(X, K_1)$ coincides with the cohomology group $H^d(X, \mathcal{K}_{d+1})$, where $d = \text{dim}(X)$ and $\mathcal{K}_i$ is the Zariski sheaf associated to the Quillen $K$-theory presheaf $U \mapsto K_i(U)$.  We also note that $A_0(X, K_1)$ coincides with the motivic cohomolgy group $H^{2d+1}(X, \Z(d+1))$ \cite[Lem. 4.11]{Voe2}

\begin{defn}\label{def:reduced}
Let $X$ be a complete $F$-variety.  For $p  \geq 0$ define the \emph{norm homomorphism} $N^p: A_0(X, K_p) \to K_p(F)$ by $$N^p\left(\sum (\alpha_x, x)\right) = \sum c_{F(x)/F}(\alpha_x),$$ where $c_{F(x)/F}$ is the corestriction.  The kernel of $N^p$ will be denoted $\overline{A}_0(X, K_p)$.  In low degree (e.g., for $p = 0, 1$) these norm homomorphisms coincide with those defined in terms of Milnor $K$-theory \cite[\S 1.5]{ChernMerkSpin}.  For the remainder, we denote the norm map $N^0$ by $\text{deg}: \text{CH}_0(X) \to \Z$, noting that $A_0(X, K_0) = \text{CH}_0(X)$ and $K_0(F) = \Z$, and refer to it as the \emph{degree homomorphism} \cite{ChernMerk}.  Its image is equal to $\ind(X) \Z$.
\end{defn}

\section{Algebraic Groups}\label{section:alggrp}

Let $A$ be a central simple algebra over $F$, and define the \emph{general linear group} $\textbf{GL}_1(A)$ of invertible elements in $A$ by $\textbf{GL}_1(A)(R) = A^{\times}_R$ for any commutative $F$-algebra $R$.  We write $\gm = \textbf{GL}_1(F)$.  We will be interested in certain subgroups of $\textbf{GL}_1(A)$ which reflect information about involutions on $A$.

Let $(A, \sigma)$ be an algebra with symplectic involution.  For any commutative $F$-algebra $R$, let $$\textbf{GSp}(A, \sigma)(R) = \{ a \in A_R ^{\times} \mid a \cdot \sigma_R(a) \in R^{\times}\}$$ $$\textbf{Sp}(A,\sigma)(R) = \{a \in A^{\times}_R \mid a \cdot \sigma_R (a) = 1\}.$$ These are (non-respectively) the \emph{symplectic} and \emph{general symplectic} groups of $(A, \sigma)$.  In the above definition, $\sigma_R: A_R \to A_R$ is the involution obtained by extending scalars to $R$.  Notice that we have a group homomorphism $ \mu : \textbf{GSp}(A, \sigma) \to \gm$ defined by $\mu_R = a \cdot \sigma_R(a)$ on $R$-points.  We call this the \emph{multiplier map} (associated to $\sigma$) which clearly has kernel $\textbf{Sp}(A, \sigma)$.

Now let $(V, q)$ be a non-degenerate quadratic space and define the \emph{Clifford algebra} $C(V, q)$ to be the quotient of the tensor algebra $T(V) = \bigoplus_i V^{\otimes i}$ by the ideal $I(q)$ generated by all elements of the form $v \otimes v - q(v) \cdot 1$ with $v \in V$. The $\mathbb{N}$-grading on $T(V)$ induces a $\Z/2$-grading on $C(V, q) = C_0(V, q) \oplus C_1(V, q),$ and we call $C_0(V,q)$ the \emph{even Clifford algebra}.  Define the \emph{even Clifford group} by $$\mathbf{\Gamma}^+(V, q)(R) = \{ g \in C_0(V, q)^{\times}_R \mid g \cdot V_R \cdot g^{-1} = V_R\}.$$  The identity map on $V$ extends to an involution on $T(V)$ which fixes the ideal $I(q)$.  The induced involution on $C(V, q)$ restricts to $C_0(V, q)$ and we denote the resulting involution by $\tau$, and call it the \emph{canonical involution} on $C_0(V, q)$.  The multiplier map associated to $\tau$ defines a homomorphism $\text{Sn}: \mathbf{\Gamma}^+(V, q) \to \gm$ via $x \mapsto x \cdot \tau (x)$, called the \emph{spinor norm} and its kernel is denoted $$\textbf{Spin}(V, q) = \{ g \in \mathbf{\Gamma}^+(V, q)(R)  \mid \text{ }  g \cdot \tau(g) = 1\}.$$

\begin{rem}
We follow the same notational convention as \cite{ChernMerkSpin}, utilizing $\text{GSp}(A, \sigma)$, $\text{Sp}(A, \sigma)$, $\Gamma^+(V, q)$, and $\text{Spin}(V, q)$ to denote the groups of $F$-points of the corresponding algebraic groups $\textbf{GSp}(A, \sigma), \textbf{Sp}(A, \sigma), \mathbf{\Gamma}^+(V, q)$, and $\textbf{Spin}(V, q)$.  
\end{rem}

Let $(A, \sigma)$ be an algebra with symplectic involution.  The multiplier map associated to $\sigma$ gives rise to a short exact sequence \cite[$\S$23.A]{MerkBook}$$ 1 \to \textbf{Sp}(A, \sigma) \to \textbf{GSp}(A, \sigma) \xrightarrow{\mu} \gm \to 1.$$  Evaluating on $F$-points, this yields an exact sequence of groups \cite[Prop. 22.10]{MerkBook} $$1 \to \text{Sp}(A, \sigma) \to \text{GSp}(A,\sigma) \xrightarrow{\mu} F^{\times}.$$  The image of the multiplier map (on $F$-points) is called the \emph{group of multipliers of similitudes of} $(A, \sigma)$, denoted $G(A, \sigma)  \subset F^{\times}$ \cite[$\S$12.B]{MerkBook}.  This gives a short exact sequence $1 \to \text{Sp}(A, \sigma) \to \text{GSp}(A,\sigma) \xrightarrow{\mu} G(A, \sigma) \to 1$.

Similarly, for a quadratic space $(V, q)$, the spinor norm fits into a short exact sequence $$ 1 \to \textbf{Spin}(V, q) \to \mathbf{\Gamma}^{+} (V, q) \xrightarrow{\text{Sn}} \gm \to 1,$$  and evaluating on $F$-points gives an exact sequence $1 \to \text{Spin}(V,q) \to \Gamma^+(V, q) \to F^{\times}$.  The image of the spinor norm (on $F$-points) is called the \emph{group of spinor norms on} $(V, q)$, denoted $\text{Sn}(V, q)$ \cite[Def. 13.30]{MerkBook}.

\subsection{Exceptional Identifications}\label{section:excid}

Here we recall some facts given in \cite[$\S$15.C]{MerkBook}.  Let $\mathsf{B}_2$ be the groupoid of \emph{oriented} quadratic spaces \cite[Def. 12.40]{MerkBook} of dimension 5, where the morphisms are isometries which preserve the orientation.  Let $\mathsf{C}_2$ be the groupoid of central simple algebras of degree 4 with symplectic involution, where the morphisms are the $F$-algebra isomorphisms which preserve the involutions.

Define $\textbf{C}: \mathsf{B}_2 \to \mathsf{C}_2$ via $(V, q, \zeta) \mapsto (C_0(V, q), \tau)$, where $\tau$ is the canonical involution on the even Clifford algebra $C_0(V, q)$.

The map in the other direction requires further development.  Let $(A, \sigma)$ be a central simple algebra of degree 4 with symplectic involution and consider the collection $\text{Symd}(A, \sigma)$ of $\sigma$-symmetrized elements in $A$.  The reduced characteristic polynomial of an element $a \in \text{Symd}(A, \sigma)$ is a square and takes the form $$\text{Prd}_{A, a}(x) = \left(x^2 - \text{Trp}_{\sigma}(a)x + \text{Nrp}_{\sigma}(a)\right)^2.$$  Here, $\text{Trp}_{\sigma}$ is the \emph{Pfaffian trace} and is a linear form on $\text{Symd}(A, \sigma)$;  $\text{Nrp}_{\sigma}$ is the \emph{Pfaffian norm} and is a quadratic form on $\text{Symd}(A, \sigma)$.  Notice that if $\text{Trp}_{\sigma}(a) = 0$, then $ 0 = a^2 + \text{Nrp}_{\sigma}(a)$ since $a$ satisfies its own characteristic polynomial.

Let $\text{Symd}(A, \sigma)^0 = \{ a \in \text{Symd}(A, \sigma) \mid \text{Trp}_{\sigma}(a) = 0 \}$.  Then $- \text{Nrp}_{\sigma}(a) = a^2$ defines a quadratic form on $\text{Symd}(A, \sigma)^0$ which we will denote by $s_{\sigma}$.  We have thus associated a quadratic space (of dimension 5) to $(A, \sigma)$.  Lastly, there is a unique orientation $\eta$ on this quadratic space, and we refer the reader to loc. cit.  Define $\textbf{S}: \mathsf{C}_2 \to \mathsf{B}_2$ via $(A, \sigma) \mapsto (\text{Symd}(A, \sigma)^0, s_{\sigma}, \eta)$.

\begin{prop}[\cite{MerkBook}, Prop. 15.16]
The functors $\operatorname{\mathbf{C}} : \mathsf{B}_2 \to \mathsf{C}_2$ and $\operatorname{\mathbf{S}} : \mathsf{C}_2 \to \mathsf{B}_2$ define an equivalence of groupoids.
\end{prop}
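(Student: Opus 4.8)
The plan is to exhibit $\mathbf{C}$ and $\mathbf{S}$ as a quasi-inverse pair by constructing canonical natural isomorphisms $\mathbf{C}\circ\mathbf{S}\cong\operatorname{id}_{\mathsf{C}_2}$ and $\mathbf{S}\circ\mathbf{C}\cong\operatorname{id}_{\mathsf{B}_2}$; since both categories are groupoids it is equivalent, and often cleaner, to check that each functor is essentially surjective and induces isomorphisms on automorphism groups. I would begin with the composite $\mathbf{C}\circ\mathbf{S}$, the transparent direction. Given $(A,\sigma)\in\mathsf{C}_2$, the relation $a^2=-\operatorname{Nrp}_\sigma(a)=s_\sigma(a)$ for $a\in\operatorname{Symd}(A,\sigma)^0$ — recorded in the discussion preceding the statement — says precisely that the inclusion $\operatorname{Symd}(A,\sigma)^0\hookrightarrow A$ satisfies the Clifford relation for $(\operatorname{Symd}(A,\sigma)^0,s_\sigma)$. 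By the universal property of the Clifford algebra it extends to an $F$-algebra homomorphism, whose restriction to the even part gives $\Phi\colon C_0(\operatorname{Symd}(A,\sigma)^0,s_\sigma)\to A$. Both source and target are central simple of dimension $16$, and $\Phi$ is nonzero, so simplicity forces $\Phi$ to be injective and hence an isomorphism. That $\Phi$ intertwines the canonical involution $\tau$ with $\sigma$ is checked on products: $\tau(vw)=wv$, while $\sigma(\Phi(v)\Phi(w))=\sigma(\Phi(w))\sigma(\Phi(v))=\Phi(w)\Phi(v)$ because symmetrized elements are $\sigma$-fixed.

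For the reverse composite I would use the filtration $C_0(V,q)\cong F\oplus\Lambda^2 V\oplus\Lambda^4 V$. A direct computation shows the canonical involution $\tau$ acts as $+1$ on $F\oplus\Lambda^4 V$ and as $-1$ on $\Lambda^2 V$, so that $\operatorname{Symd}(C_0(V,q),\tau)^0$ is exactly the five-dimensional summand $\Lambda^4 V$. The orientation $\zeta$, i.e. a trivialization of $\Lambda^5 V$, together with the form $q$, furnishes a canonical identification $\Lambda^4 V\cong V$, and one must then match the form $s_\tau$ with $q$ and the orientation $\eta$ with $\zeta$. Essential surjectivity of $\mathbf{C}$ is immediate from the first paragraph, and that of $\mathbf{S}$ follows once this identification is in place; the automorphism groups agree because the functor realizes the exceptional isomorphism $\operatorname{PGSp}(A,\sigma)\cong\mathbf{SO}(\operatorname{Symd}(A,\sigma)^0,s_\sigma)$ of adjoint groups of type $\mathsf{B}_2=\mathsf{C}_2$.

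The main obstacle is the last matching in the reverse direction: promoting the identification $\Lambda^4 V\cong V$ to an honest orientation-preserving isometry rather than merely a similitude. Writing $z$ for the fundamental central element $e_1\cdots e_5$ of $C(V,q)$, the element $vz$ spans $\Lambda^4 V$ and satisfies $(vz)^2=q(v)z^2$, so the Clifford picture a priori produces $s_\tau$ only up to the scalar $z^2$, which is the discriminant of $q$. Controlling this scalar and the attendant sign is exactly the role of the orientation datum, and pinning it down — so that $s_\tau$ and $q$, and $\eta$ and $\zeta$, correspond on the nose — is the delicate heart of the argument. A secondary source of care is characteristic $2$: there $\operatorname{Symd}\neq\operatorname{Sym}$, the fundamental element and the quadratic form must be handled through the Pfaffian invariants $\operatorname{Trp}_\sigma$ and $\operatorname{Nrp}_\sigma$ rather than by the naive orthogonal-basis computation, and the nondegeneracy conventions for odd-dimensional forms demand the characteristic-free formulation used throughout.
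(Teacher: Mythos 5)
This proposition is not proved in the paper at all: it is quoted verbatim from the Book of Involutions (KMRT, Prop.~15.16), so your attempt can only be compared with the argument in that reference. Your first half is correct, complete, and is exactly the standard argument: the relation $a^2 = s_\sigma(a)\cdot 1$ for $a\in\operatorname{Symd}(A,\sigma)^0$ extends, by the universal property of the Clifford algebra, to a homomorphism $\Phi\colon C_0\bigl(\operatorname{Symd}(A,\sigma)^0,s_\sigma\bigr)\to A$; the source is central simple of dimension $16$ (odd-dimensional nondegenerate quadratic space), the target is central simple of dimension $16$, so the nonzero map $\Phi$ is an isomorphism, and your check of $\Phi\circ\tau=\sigma\circ\Phi$ on the algebra generators $vw$ is the right one.

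The genuine gap is that the second half is never proved. You reduce it, correctly, to showing that $v\mapsto vz$ is an orientation-compatible isometry of $(V,q)$ onto $\bigl(\operatorname{Symd}(C_0(V,q),\tau)^0,s_\tau\bigr)=(\Lambda^4V,s_\tau)$, you correctly locate the obstruction $(vz)^2=q(v)z^2$, and then you stop, declaring the resolution ``the delicate heart of the argument.'' A proof cannot defer its heart. Moreover, what closes the gap is not a delicate argument but the definition you never invoke: an orientation in the sense the paper cites (KMRT, Def.~12.40) is a \emph{normalized} generator $\zeta$ of the center of the full Clifford algebra --- with KMRT's conventions, $\zeta^2=1$ (in particular only forms whose discriminant is that of the split form admit one, which is consistent, since every $s_\sigma$ is an $\operatorname{SO}_5$-twist of the split form). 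Taking $z$ to be the orientation $\zeta$ itself, rather than an arbitrary product $e_1\cdots e_5$ as you do, the scalar is pinned on the nose: $(v\zeta)^2=q(v)\zeta^2=q(v)$, so $v\mapsto v\zeta$ is an honest isometry, and what remains is the finite verification --- also absent from your sketch --- that it matches $\zeta$ with the canonical orientation $\eta$ of $\mathbf{S}(\mathbf{C}(V,q,\zeta))$, together with the routine naturality of both isomorphisms. Two lesser points: your fallback criterion ``essentially surjective and isomorphisms on automorphism groups'' does not characterize equivalences of groupoids (it does not preclude identifying non-isomorphic objects; one needs full faithfulness, equivalently injectivity on isomorphism classes as well), though your primary quasi-inverse strategy is unaffected; and your characteristic-$2$ paragraph, like the orientation step, is an acknowledgment of work to be done rather than the work itself.
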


\begin{prop}[\cite{MerkBook}, Prop. 15.18]\label{prop:inv}
Suppose that $(A, \sigma) \in \mathsf{C}_2$ and $(V, q, \zeta) \in \mathsf{B}_2$ correspond to one another under the above groupoid equivalence.  Then the special Clifford group and general symplectic group coincide: $\Gamma^+(V, q) = \operatorname{GSp}(A, \sigma)$.  The spin group and symplectic group coincide: $\operatorname{Spin}(V, q) = \operatorname{Sp}(A, \sigma)$.  The group of spinor norms and the group of multipliers coincide: $\operatorname{Sn}(V, q) = G(A, \sigma)$.
\end{prop}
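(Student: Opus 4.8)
The plan is to work on the $\mathsf{B}_2 \to \mathsf{C}_2$ side: by the groupoid equivalence (Prop.~\ref{prop:inv} is stated for corresponding objects, so invoking the equivalence) we may assume $(A,\sigma) = \operatorname{\mathbf{C}}(V,q,\zeta) = (C_0(V,q),\tau)$. Then both $\mathbf{\Gamma}^+(V,q)$ and $\mathbf{GSp}(A,\sigma)$ are closed subgroup schemes of $\mathbf{GL}_1(C_0(V,q))$, cut out by conditions on $g \in C_0(V,q)^{\times}$: namely $gVg^{-1} = V$ for the former and $g\tau(g) \in \gm$ for the latter, since $\sigma = \tau$. I would prove these two subgroups coincide inside $\mathbf{GL}_1(C_0(V,q))$; the identity $\mathbf{Spin}(V,q) = \mathbf{Sp}(A,\sigma)$ is then immediate, because the spinor norm $\mathrm{Sn}\colon g \mapsto g\tau(g)$ and the symplectic multiplier $\mu\colon g \mapsto g\sigma(g)$ are literally the same morphism to $\gm$ (as $\sigma=\tau$), so their kernels agree.

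For the inclusion $\mathbf{\Gamma}^+(V,q) \subseteq \mathbf{GSp}(A,\sigma)$ I would run the classical computation that the spinor norm of a Clifford-group element is a scalar. Fix $g \in \mathbf{\Gamma}^+(V,q)(R)$, so $gVg^{-1} = V$. Since $\tau$ is the reversal involution, which fixes $V$ pointwise, for every $v \in V$ the element $gvg^{-1}$ again lies in $V$, hence $\tau(gvg^{-1}) = gvg^{-1}$; expanding the left-hand side by the anti-automorphism property gives $\tau(g)^{-1} v\,\tau(g) = gvg^{-1}$, i.e. $v\,(\tau(g)g) = (\tau(g)g)\,v$. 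Thus $\tau(g)g$ centralizes $V$, and as $V$ generates $C(V,q)$ we get $\tau(g)g \in Z(C(V,q))$; since moreover $\tau(g)g \in C_0(V,q)$ and $Z(C(V,q)) \cap C_0(V,q) \subseteq Z(C_0(V,q)) = F$ (the even Clifford algebra of a nondegenerate odd-dimensional form is central simple), we conclude $\tau(g)g = c \in F^{\times}$, whence $g\tau(g) = gcg^{-1} = c$ as well. Therefore $g \in \mathbf{GSp}(A,\sigma)(R)$ and $\mathrm{Sn}(g) = \mu(g)$. This argument is functorial in $R$, so it yields a closed immersion $\mathbf{\Gamma}^+(V,q) \hookrightarrow \mathbf{GSp}(A,\sigma)$ of group schemes.

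It remains to upgrade this inclusion to an equality, and this is where the exceptional low-rank structure does the real work: rather than attempt to show directly that the general-symplectic condition $g\tau(g)\in\gm$ forces $gVg^{-1} = V$ (which fails in higher dimension), I would argue by dimension. Both groups are smooth and connected: $\mathbf{GSp}(A,\sigma) = \mathbf{GSp}_4$ is a similitude group of dimension $11$, while the standard vector representation realizes $\mathbf{\Gamma}^+(V,q)$ as an extension $1 \to \gm \to \mathbf{\Gamma}^+(V,q) \to \mathbf{SO}(V,q) \to 1$ with $\dim \mathbf{SO}_5 = 10$, so $\dim \mathbf{\Gamma}^+(V,q) = 11$ as well. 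A closed immersion of smooth connected algebraic groups of equal dimension, with connected target, is an isomorphism (the image is a closed connected subgroup of full dimension, hence all of the connected target). Therefore $\mathbf{\Gamma}^+(V,q) = \mathbf{GSp}(A,\sigma)$, and intersecting with $\ker(\mathrm{Sn}) = \ker(\mu)$ gives $\mathbf{Spin}(V,q) = \mathbf{Sp}(A,\sigma)$. The main obstacle is precisely this reverse inclusion: it is a genuinely type-specific phenomenon (the coincidence $\mathrm{GSpin}_5 = \mathrm{GSp}_4$), and the cleanest route is the dimension--connectedness argument above rather than an explicit verification that symplectic similitudes preserve the copy of $V$ sitting in the odd part $C_1(V,q)$.
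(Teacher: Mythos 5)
The paper contains no proof of this proposition: it is imported verbatim from \cite[Prop.~15.18]{MerkBook}, so the comparison below is with the standard argument of the Book of Involutions, whose mechanism the paper itself records in Proposition~\ref{prop:projquad} and Remark~\ref{rem:quadhyp}. Your proof is correct in characteristic $\neq 2$, and its first half --- the inclusion $\mathbf{\Gamma}^+(V,q)\subseteq \mathbf{GSp}(C_0(V,q),\tau)$ via the observation that $\tau(g)g$ centralizes $V$, hence is central and even, hence a scalar --- is exactly the classical Clifford-group computation. Where you genuinely diverge is the reverse inclusion, and here your framing is slightly off: the direct verification you set aside as unworkable in fact works, and in one line. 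For a similitude $g$ one has $\sigma(g)=\mu(g)g^{-1}$, so $\sigma(gxg^{-1})=g\sigma(x)g^{-1}$; thus conjugation by $g$ commutes with $\sigma$, preserves $\operatorname{Symd}(A,\sigma)$, and (since conjugation preserves reduced characteristic polynomials, hence the Pfaffian trace) preserves $\operatorname{Symd}(A,\sigma)^0$. Under the identification $A=C_0(V,q)$ this $5$-dimensional space is precisely $V\cdot z$, where $z=e_1\cdots e_5$ is a central odd unit, so preserving it is equivalent to preserving $V$, giving $\operatorname{GSp}(A,\sigma)\subseteq \Gamma^+(V,q)$ with no dimension count. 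The genuinely type-specific input is therefore not that the similitude condition ``happens'' to cut out a group of the right dimension, but that $\operatorname{Symd}(A,\sigma)^0$ is $5$-dimensional and generates $A$ exactly in degree $4$; in higher degree the same conjugation argument still produces an orthogonal representation of the similitude group, just no longer one landing on $V$.

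As for what the two routes buy: yours avoids any computation with symmetrized elements, but pays with smoothness, connectedness, and dimension inputs, in particular the extension $1\to\gm\to\mathbf{\Gamma}^+(V,q)\to\mathbf{SO}(V,q)\to 1$ and $\dim\mathbf{SO}_5=10$. That is where your one real gap sits: the paper's standing hypothesis is a perfect field of \emph{arbitrary} characteristic, and in characteristic $2$ the orthogonal group scheme of a nonsingular $5$-dimensional quadratic form is not smooth (this is exceptional-isogeny territory for types $\mathsf{B}_2$ and $\mathsf{C}_2$), so the exact sequence and the dimension count as written do not apply, whereas the conjugation argument and KMRT's statement are characteristic-free. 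Since the paper elsewhere invokes Krashen's Proposition~\ref{prop:quadhyper}, which also assumes $\operatorname{char}(F)\neq 2$, this is a caveat rather than a fatal flaw --- but you should either state the characteristic restriction explicitly or replace the dimension count by the one-line conjugation argument for the reverse inclusion.
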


The following proposition is in some sense an algebraic restatement of of Proposition~\ref{prop:quadhyper}, although there is no requirement that $\text{char}(F) \neq 2$.

\begin{prop}[\cite{MerkBook}, Prop. 15.20]\label{prop:projquad}
(1).  For every right ideal $I \subset A$ of reduced dimension 2, the intersection $I_{\sigma} := I \cap \operatorname{Symd}(A, \sigma)$ is a 1-dimensional subspace of $\operatorname{Symd}(A, \sigma)$ which is isotropic for the quadratic form $\operatorname{Nrp}_{\sigma}$.  If $\sigma(I) \cdot I = 0$ (i.e. if $I$ is $\sigma$-isotropic), then $I_{\sigma} \subset \operatorname{Symd}(A, \sigma)^0$ and is thus $s_{\sigma}$-isotropic.

(2).  For every nonzero vector $x \in \operatorname{Symd}(A, \sigma)$ with $\operatorname{Nrp}_{\sigma}(x) = 0$, the right ideal $xA$ has reduced dimension 2.  Moreover, if $x \in \operatorname{Symd}(A, \sigma)^0$ then $xA$ is $\sigma$-isotropic.
\end{prop}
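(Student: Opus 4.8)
The plan is to reduce to the split case and reinterpret both statements in exterior algebra. Every construction appearing in the statement commutes with flat base change $F \to F_s$ to a separable closure: the space $\operatorname{Symd}(A,\sigma)$ is the image of $1+\sigma$, the forms $\operatorname{Nrp}_\sigma$ and $\operatorname{Trp}_\sigma$ are defined over $F$, the ideal $xA$ is the image of left multiplication by $x$, and the intersection $I_\sigma = I \cap \operatorname{Symd}(A,\sigma)$ is the kernel of $I \oplus \operatorname{Symd}(A,\sigma) \to A$, all of which are preserved under extension of scalars. The conclusions to be proved---that $\dim_F I_\sigma = 1$, that a prescribed vector is isotropic for $\operatorname{Nrp}_\sigma$ or for $s_\sigma$, that $xA$ has reduced dimension $2$, and that a right ideal is $\sigma$-isotropic---are likewise insensitive to this base change. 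Since $F$ is perfect I may therefore assume $A = \operatorname{End}(V)$ with $\dim_F V = 4$ and $\sigma$ the adjoint involution of a nondegenerate alternating form $b$ on $V$.

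In this split setting I would record the standard dictionary. The right ideals of reduced dimension $2$ are exactly $I_W = \{f \in \operatorname{End}(V) : \operatorname{im} f \subseteq W\}$ for two-dimensional subspaces $W \subseteq V$, and $I_W$ is $\sigma$-isotropic precisely when $W$ is totally isotropic for $b$. Using $b$ to identify $\operatorname{End}(V) = V \otimes V^{*} \cong V \otimes V$, the involution $\sigma$ becomes the signed switch $u \otimes v \mapsto -v \otimes u$, whence $\operatorname{Symd}(A,\sigma)$ is identified with $\Lambda^2 V$. Under this identification I expect $\operatorname{Trp}_\sigma$ to correspond to contraction against $b \in \Lambda^2 V^{*}$ and $\operatorname{Nrp}_\sigma$ to be, up to a unit, the wedge-square form $\omega \mapsto \omega \wedge \omega \in \Lambda^4 V \cong F$; equivalently, writing the reduced characteristic polynomial of $\omega$ as $(x^2 - \operatorname{Trp}_\sigma(\omega) x + \operatorname{Nrp}_\sigma(\omega))^2$, the scalar $\operatorname{Nrp}_\sigma(\omega)$ is the product of the two roots of the quadratic factor, hence vanishes exactly when $\omega$, viewed as an endomorphism, has rank $\leq 2$. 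The one classical input I need is then the Plücker relation: a nonzero $\omega \in \Lambda^2 V$ is decomposable, $\omega = w_1 \wedge w_2$, if and only if $\omega \wedge \omega = 0$, equivalently $\operatorname{Nrp}_\sigma(\omega) = 0$.

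Granting the dictionary, both parts are immediate. For (1), choosing a basis of $V$ adapted to $W$ gives $I_W \cap \Lambda^2 V = \Lambda^2 W$, which is one-dimensional and spanned by the decomposable vector $w_1 \wedge w_2$; being decomposable it is $\operatorname{Nrp}_\sigma$-isotropic, and since $\operatorname{Trp}_\sigma(w_1 \wedge w_2)$ is a nonzero multiple of $b(w_1, w_2)$, it lies in $\operatorname{Symd}(A,\sigma)^0$ exactly when $W$ is $b$-isotropic, i.e. when $I$ is $\sigma$-isotropic; there $s_\sigma = -\operatorname{Nrp}_\sigma$, so $s_\sigma$-isotropy follows. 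For (2), a nonzero $\operatorname{Nrp}_\sigma$-isotropic $x$ is decomposable by Plücker, say $x = w_1 \wedge w_2$ with $w_1, w_2$ independent; as an endomorphism $x$ is the rank-two map $u \mapsto b(w_2, u) w_1 - b(w_1, u) w_2$ with image $W = \langle w_1, w_2 \rangle$, so the right ideal it generates is all of $I_W$, of reduced dimension $2$, and if moreover $x \in \operatorname{Symd}(A,\sigma)^0$ then $b(w_1, w_2) = 0$, so $W$ is isotropic and $xA$ is $\sigma$-isotropic.

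The main obstacle is establishing the dictionary itself---verifying with correct normalizations that $\operatorname{Nrp}_\sigma$ and $\operatorname{Trp}_\sigma$ are identified with the wedge-square form and with contraction against $b$---and doing so uniformly in all characteristics. The delicate case is $\operatorname{char}(F) = 2$: there the relation $\operatorname{Trd} = 2\operatorname{Trp}_\sigma$ degenerates, so one must work directly from the characteristic-free factorization of the reduced characteristic polynomial to define $\operatorname{Nrp}_\sigma$ and $\operatorname{Trp}_\sigma$ and to check that $\operatorname{Symd}(A,\sigma)$ still matches $\Lambda^2 V$. Once the forms are correctly identified, the exterior-algebra computations in (1) and (2) and the descent from $F_s$ to $F$ are routine.
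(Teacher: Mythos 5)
The paper itself gives no proof of this proposition: it is quoted directly from [KMRT, Prop.\ 15.20], so there is no in-paper argument to compare against. Judged on its own terms, your split-and-descend strategy is the standard route for statements of this kind: the constructions you list do commute with the base change $F \to F_s$ and the conclusions do descend; the dictionary $I_W = \operatorname{Hom}(V,W)$, $\sigma(u \otimes v) = -v \otimes u$, $\operatorname{Symd}(A,\sigma) \cong \Lambda^2 V$ is correct in all characteristics; and, granting the identification of $\operatorname{Nrp}_\sigma$ with the Pfaffian form, your computations for (1) and (2) are right --- in particular $I_W \cap \Lambda^2 V = \Lambda^2 W$, $\operatorname{Trp}_\sigma(w_1 \wedge w_2) = \pm b(w_1,w_2)$, and $xA = I_W$ for $x$ decomposable with image $W$ all check out.

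The one step that fails as written is your ``one classical input'' in characteristic $2$. On $\Lambda^2 V$ with $\dim V = 4$, the wedge-square form $\omega \mapsto \omega \wedge \omega$ is \emph{identically zero} in characteristic $2$: writing $\omega = \sum_{i<j} a_{ij}\, e_i \wedge e_j$, one gets $\omega \wedge \omega = 2\left(a_{12}a_{34} - a_{13}a_{24} + a_{14}a_{23}\right) e_1 \wedge e_2 \wedge e_3 \wedge e_4$, which vanishes. So it cannot agree with $\operatorname{Nrp}_\sigma$ up to a unit ($\operatorname{Nrp}_\sigma$ is a nondegenerate quadratic form in every characteristic), and the criterion ``$\omega$ decomposable iff $\omega \wedge \omega = 0$'' becomes vacuous, hence false. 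The repair is standard but is precisely the point that cannot be skipped: the correct identification is $\operatorname{Nrp}_\sigma(\omega) = a_{12}a_{34} - a_{13}a_{24} + a_{14}a_{23}$, the integrally defined ``half'' of the wedge square (the Pfaffian), and decomposability in $\Lambda^2$ of a $4$-dimensional space is equivalent to the vanishing of this Pfaffian --- the single Pl\"ucker relation for $\operatorname{Gr}(2,4)$. You did flag characteristic $2$ as the delicate case, but you located the difficulty in the degeneration of $\operatorname{Trd} = 2\operatorname{Trp}_\sigma$; the degeneration of the wedge-square form is the more serious issue, since it breaks the key lemma your argument is built on rather than a normalization. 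Note that the characteristic $2$ case is not idle here: the paper's Remark 3.5 emphasizes that this proposition, unlike Krashen's Proposition 2.10, carries no hypothesis $\operatorname{char}(F) \neq 2$. With the Pfaffian in place of $\omega \wedge \omega$ (checked against the characteristic-free factorization $\operatorname{Prd}_{A,a}(x) = \left(x^2 - \operatorname{Trp}_\sigma(a)x + \operatorname{Nrp}_\sigma(a)\right)^2$, e.g.\ on elements $e_i \wedge e_j$ of a symplectic basis), your argument goes through in all characteristics.
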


\begin{rem}\label{rem:quadhyp}
This proposition gives mutually inverse maps $I \mapsto I \cap \text{Symd}(A, \sigma)$ and $xF \to xA$ between the sets of right ideals of $A$ of reduced dimension 2 and $\text{Nrp}_{\sigma}$-isotropic subspaces of $\text{Symd}(A, \sigma)$.  Geometrically, this defines an isomorphism $$\SB _2(A) \overset{\sim}{\longrightarrow} \{\text{Nrp}_{\sigma} = 0\} \subset \mathbb{P}(\text{Symd}(A, \sigma)) \cong \mathbb{P}^5.$$  Restricting to the collection of $\sigma$-isotropic ideals, one obtains a bijection with the collection of 1-dimensional $s_{\sigma}$-isotropic subspaces of $\text{Symd}(A, \sigma) ^0$.  Hence, we have an isomorphism $$\text{IV}_2(A, \sigma) \overset{\sim}{\longrightarrow} \{s_{\sigma} = 0\} \subset \mathbb{P}(\text{Symd}(A, \sigma) ^0) \cong \mathbb{P}^4.$$  This realizes $\text{IV}_2(A, \sigma)$ as a quadric hypersurface in $\mathbb{P}^4$ as stated in Proposition~\ref{prop:quadhyper} above.  That is, $\text{IV}_2(A, \sigma)$ is obtained from $\SB_2(A)$ by intersecting with the hyperplane $   \{\text{Trd}_A = 0\}$.
\end{rem}

\subsection{$R$-equivalence and $K_1$-zero-cycles} Here we define the notions of $R$-triviality and index of algebraic groups which we use to describe the Chernousov-Merkurjev framework for computing $K_1$-zero-cycles.

\begin{defn}  See \cite{ChernMerkSpin}. Let $G$ be an algebraic group over $F$.  A point $x \in G(F)$ is called $R$-\emph{trivial} if there is a rational morphism $f: \pone \dashrightarrow G$, defined at 0 and 1,  and with $f(0) = 1$ and $f(1) = x$.  The collection of all $R$-trivial elements of $G(F)$ is denoted $RG(F)$ and is a normal subgroup of $G(F)$.  We denote the set of $R$-equivalence classes $G(F)/RG(F)$ by $G(F)/R$.  If $H$ is a normal closed subgroup of $G$ then $RH(F)$ is a normal subgroup of $G(F)$ \cite[Lemma 1.2]{ChernMerkSU}.

\begin{defn}
Let $G$ be an algebraic group and $\rho: G \to \gm$ a character on $G$.  The \emph{index} of $\rho$, denoted $\text{ind}(\rho)$, is the least positive integer in the image of the composition $$G(F((t))) \xrightarrow{\rho} F((t))^{\times} \xrightarrow{v} \Z.$$  Here $F((t))$ is the field of formal Laurent series, $v$ is the discrete valuation on $F((t))$ defined by taking the smallest power of $t$, and $\rho$ denotes the induced character $\rho_{F((t))}$.
\end{defn}

\end{defn}

For the proofs of our main results, we utilize the framework established in \cite{ChernMerkSpin} which recovers groups of $K_1$-zero-cycles of homogeneous varieties in terms of algebraic groups and corresponding $R$-equivalence classes.  Let $G$ be an algebraic group over $F$.  Assume $\rho: G \to \gm$ is a nontrivial character of $G$ with kernel $H$, and $X$ is a smooth complete $F$-variety satisfying
\begin{enumerate}
\item[(I)]  $G$ is connected, reductive, and rational and $H$ is smooth.
\item[(II)] For any field extension $L/F$, the degree homomorphism $\text{deg}: \text{CH}_0(X_L) \to \Z$ is injective.
\item[(III)] For any field extension $L/F$ satisfying $X(L) \neq \emptyset$ the norm homomorphism $N^1_L: A_0(X_L, K_1) \to L^{\times}$ is an isomorphism.
\item[(IV)] For any field extension $L/F$, $\text{ind}(\rho_L) = \text{ind}(X_L)$.
\end{enumerate}

\begin{thm}[\cite{ChernMerkSpin}]\label{thm:chernmerk}
For $(G, \rho, X)$ as above, i.e., satisfying $\operatorname{(I)-(IV)}$, there are canonical isomorphisms $$G(F)/ RH(F) \simeq A_0(X, K_1) $$ $$H(F)/R \simeq \overline{A}_0(X, K_1).$$
\end{thm}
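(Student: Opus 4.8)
The plan is to compare the two functors $L \mapsto G(L)/RH(L)$ and $L \mapsto A_0(X_L, K_1)$ through their natural ``norm'' maps to $L^{\times}$, and then to deduce both isomorphisms at $L = F$ from a single five-lemma argument. On the cycle side, Definition~\ref{def:reduced} gives the norm homomorphism $N^1 \colon A_0(X, K_1) \to F^{\times}$ with kernel $\overline{A}_0(X, K_1)$, hence a short exact sequence $0 \to \overline{A}_0(X, K_1) \to A_0(X, K_1) \xrightarrow{N^1} \operatorname{Im}(N^1) \to 0$. On the group side, since $G$ is connected the nontrivial character $\rho$ is surjective with kernel $H$, so $\rho$ induces a map $G(F)/RH(F) \to F^{\times}$ whose kernel is $H(F)/RH(F) = H(F)/R$ and whose image is $\rho(G(F))$, giving $1 \to H(F)/R \to G(F)/RH(F) \xrightarrow{\rho} \rho(G(F)) \to 1$. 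Everything then reduces to (a) constructing a homomorphism $\Phi$ between the two middle terms with $N^1 \circ \Phi = \rho$, (b) matching the two images in $F^{\times}$, and (c) establishing the leftmost isomorphism $H(F)/R \simeq \overline{A}_0(X,K_1)$; the five lemma then forces $\Phi$ to be an isomorphism as well.

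For the images in step (b), I would first describe $\operatorname{Im}(N^1)$ explicitly: $A_0(X,K_1)$ is generated by classes $(\alpha, x)$ with $x \in X_{(0)}$ and $\alpha \in F(x)^{\times}$, and over the residue field $E = F(x)$ the variety acquires a rational point, so $\ind(X_E) = 1$ and, by condition (III), $N^1_E$ gives $A_0(X_E, K_1) \simeq E^{\times}$. Thus $\operatorname{Im}(N^1)$ is the subgroup of $F^{\times}$ generated by the norms $N_{E/F}(E^{\times})$ over all finite splitting fields $E$ of $X$. On the other side, condition (IV) identifies $\ind(\rho) = \ind(X)$ and forces the valuation data of $\rho$ over $F((t))$ to agree with the index of $X$; combined with the norm principle for the character $\rho$ of the reductive group $G$ (so that $N_{E/F}(\rho(G(E))) \subseteq \rho(G(F))$), this should identify $\rho(G(F))$ with the same subgroup $\operatorname{Im}(N^1)$ of $F^{\times}$.

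The heart of the argument is step (c) together with the construction of $\Phi$. Here I would use the rationality of $G$ (condition (I)) to control $R$-equivalence via rational curves and generic points: an element of $H(F)$ is $R$-trivial precisely when it can be joined to $1$ by a chain of rational maps $\pone \dashrightarrow G$, and rationality makes such chains abundant and functorial. The transformation $G(L)/RH(L) \to A_0(X_L, K_1)$ would be built to be compatible with $\rho$ and $N^1$ and to become the identity after base change to a splitting field $E$, where both sides are expected to collapse to $E^{\times}$ (by condition (III) and the expected vanishing of $H(E)/R$ when $\ind(X_E)=1$). Matching relations is the crux: the relations defining $A_0(X, K_1)$ come from the image of $d_X$, i.e.\ from $K_2$ of residue fields of codimension-one points (function fields of curves on $X$), and one must translate each such relation into an $R$-equivalence in $H$, and conversely realize each rational curve in $H$ as a rational equivalence of zero-cycles on $X$. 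Completeness of $X$ supplies the proper pushforward making $N^1$ and the transfers well defined, while condition (II) (injectivity of $\deg$ on $\operatorname{CH}_0(X_L)$) rigidifies the torsion-free behaviour needed to compare the two sides after every base change.

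The main obstacle I expect is exactly this translation between the two equivalence relations --- showing that $\Phi$ is well defined with kernel \emph{exactly} $RH(F)$. Surjectivity should follow once $\Phi$ is shown to hit every generator $(\alpha, x)$, using splitting over $E = F(x)$ and the norm principle to lift $\alpha$ to $G(E)$ and then corestricting; but injectivity requires that no relation among cycles be lost, i.e.\ that if a combination of group elements maps to a boundary cycle then those elements already lie in $RH(F)$. This is where the full strength of rationality of $G$ and the $R$-equivalence machinery of \cite{ChernMerkSpin} enters, through a generic-point/specialization argument on $G$ together with a compatibility of the localization sequence on $X$ with chains of rational equivalences. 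Once step (c) is in place, the five lemma applied to the two short exact sequences, using the matched images from step (b), yields $G(F)/RH(F) \simeq A_0(X,K_1)$ simultaneously with $H(F)/R \simeq \overline{A}_0(X,K_1)$.
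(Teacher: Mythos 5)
The first thing to observe is that the paper contains no proof of this statement: Theorem~\ref{thm:chernmerk} is imported verbatim from \cite{ChernMerkSpin}, so your attempt has to be measured against the Chernousov--Merkurjev proof itself, and against that standard it is a plan rather than a proof. Your bookkeeping skeleton is sound: the two short exact sequences $1 \to H(F)/R \to G(F)/RH(F) \xrightarrow{\rho} \rho(G(F)) \to 1$ and $0 \to \overline{A}_0(X,K_1) \to A_0(X,K_1) \xrightarrow{N^1} \operatorname{Im}(N^1) \to 0$ are both correct, and a compatible map $\Phi$ that is an isomorphism on kernels and on images would indeed be an isomorphism in the middle. But this reduces the theorem to exactly your items (a), (b), (c), and none of them is carried out. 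Item (c) \emph{is} the second isomorphism asserted by the theorem, and your treatment of it consists of saying that relations coming from $d_X$ ``must be translated'' into $R$-equivalences using ``the $R$-equivalence machinery of \cite{ChernMerkSpin}'' --- that is, the entire mathematical content is deferred to the very result being proven. Likewise no map $\Phi$ is ever defined: nothing in the proposal specifies which zero-cycle class is attached to $g \in G(F)$, or which class in $G(F)/RH(F)$ is attached to a generator $(\alpha, x)$; constructing this correspondence (via corestriction from the residue fields $F(x)$, where condition (III) applies, together with specialization arguments controlling well-definedness against the boundaries $d_X$) is the heart of the work in \cite{ChernMerkSpin}.

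Beyond the missing construction, several steps you treat as available are themselves nontrivial theorems. To match images in step (b) you need $\rho\colon G(E)\to E^{\times}$ to be surjective for every finite extension $E$ with $X(E)\neq\emptyset$; condition (IV) only gives $\operatorname{ind}(\rho_E)=1$, i.e.\ an element of $G(E((t)))$ whose image under $\rho$ has valuation $1$, and passing from that to surjectivity over $E$ requires a genuine specialization argument (one of the places where smoothness of $H$ and rationality of $G$ actually enter). The norm principle $N_{E/F}(\rho(G(E))) \subseteq \rho(G(F))$ is likewise not a formal property of a reductive group with a character that can be cited in this abstract setting; it is a theorem known for the classical multiplier and spinor-norm maps, not a consequence of (I)--(IV) available off the shelf. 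Finally, the ``expected vanishing of $H(E)/R$ when $\operatorname{ind}(X_E)=1$'' is itself a consequence of the theorem being proven (and quietly conflates $\operatorname{ind}(X_E)=1$ with $X(E)\neq\emptyset$, which is not automatic for an abstract $X$ satisfying (I)--(IV), only for quadrics and Severi--Brauer varieties). In short, the architecture is plausible and roughly parallels how the comparison with $N^1$ and $\rho$ is organized in \cite{ChernMerkSpin}, but every load-bearing component --- the map, the kernel isomorphism, the image identification --- is either missing or circular.
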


\begin{ex}[Quadratic Spaces \cite{ChernMerkSpin}]\label{ex:K1ZCIV}
Let $(V, q)$ be a non-degenerate quadratic space over $F$ of dimension $n \geq 2$.  The triple $\left(\mathbf{\Gamma}^+(V, q), \text{Sn}, \text{IV}(V, q)\right)$ consisting of the Clifford group, the spinor norm, and the involution variety associated to $(V, q)$  satisfies conditions (I)-(IV).  Hence there are canonical isomorphisms $$ A_0(X, K_1) \simeq \Gamma^+(V, q)/R\operatorname{Spin}(V, q), $$ $$\overline{A}_0(X, K_1) \simeq \operatorname{Spin}(V, q)/R.$$
\end{ex}

As discussed above, quadratic spaces of dimension 5 correspond to algebras of degree 4 with symplectic involution under the exceptional identification of $\mathsf{B}_2$ (which coincides with $ \mathsf{B}_2'$ for $\text{char}(F) \neq 2)$ and $\mathsf{C}_2$.  In the following section, we will use this fact to transfer the isomorphisms given above to the case of algebras with symplectic involution.

\section{Main Result}\label{section:Requiv}

In light of Example~\ref{ex:K1ZCIV}, we state our main result for algebras of degree 4. Recall that for an algebra with symplectic involution $(A, \sigma)$, the group $G(A, \sigma)$ of multipliers of similitudes is given by $G(A, \sigma) = \{\sigma(a) \cdot a \mid a \in \text{GSp}(A, \sigma)\}.$

\begin{thm}\label{thm:mainthm}
Let $(A, \sigma)$ be an algebra of degree 4 with symplectic involution and let $X = \operatorname{IV}_2(A, \sigma)$.  The group of $K_1$-zero-cycles on $X$ is the group of multipliers of similitudes of $(A, \sigma)$, i.e., there is a  canonical group isomorphism $$A_0(X, K_1) \simeq \operatorname{GSp}(A, \sigma)/\operatorname{Sp}(A, \sigma) = G(A, \sigma) $$  Moreover, $\overline{A}_0(X, K_1) = 0$.
\end{thm}

\begin{proof}
Let $V = \text{Symd}(A, \sigma)^0$.  The variety $\text{IV}_2(A, \sigma)$ is isomorphic to the quadric hypersurface $Q = \{s_{\sigma} = 0\} \subset \mathbb{P}(V)$ by Proposition~\ref{prop:quadhyper} and Proposition~\ref{prop:projquad}.  Thus, it suffices to compute $K_1$-zero-cycles on $Q$.  Taking $G = \mathbf{\Gamma}^+(V, s_{\sigma})$ and $\rho$ the spinor norm, Example~\ref{ex:K1ZCIV} yields canonical isomorphisms $$A_0(Q, K_1) \simeq \Gamma^+(V, s_{\sigma})/ R\text{Spin}(V, s_{\sigma}),$$ $$\overline{A}_0(Q, K_1) \simeq \text{Spin}(V, s_{\sigma})/R.$$ Since $(A, \sigma)$ corresponds to $(V, s_{\sigma})$ under the group of equivalence of Proposition~\ref{prop:inv}, we may write these quotients in terms of symplectic groups using the identifications $\Gamma^+(V, s_{\sigma}) = \text{GSp}(A, \sigma)$ and $\text{Spin}(V, s_{\sigma}) = \text{Sp}(A, \sigma)$.  Since $\textbf{Sp}(A, \sigma)$ is rational \cite[Prop. 2.4]{ChernMerkSU}, it is $R$-trivial and we have $R \text{Sp}(A, \sigma) = \text{Sp}(A, \sigma)$ \cite{KTAG}.  Thus, $A_0(X, K_1) = \text{GSp}(A, \sigma)/\text{Sp}(A, \sigma)$ and $\overline{A}_0(X, K_1) = 0$.  Lastly, evaluating the short exact sequence of algebraic groups $1 \to \textbf{Sp}(A, \sigma) \to \textbf{GSp}(A, \sigma) \to \gm \to 1$ on $F$-points gives $A_0(X, K_1) \simeq \operatorname{GSp}(A, \sigma)/\operatorname{Sp}(A, \sigma) =G(A, \sigma)$.
\end{proof}

\begin{cor}\label{cor:split}
Let $(A, \sigma)$ and $X$ be as above.  If $A$ is split, then $A_0(X, K_1) \simeq F^{\times}$.
\end{cor}

\begin{proof}
This follows from $G(A, \sigma) \simeq F^{\times}$ in the split case \cite[Prop. 12.20]{MerkBook}.
\end{proof}

\begin{rem}
Let $(A, \sigma)$ be an algebra of degree 4 with symplectic involution.  The map $X = \text{IV}_2(A, \sigma) \to \SB_2(A) =Y$, given by forgetting isotropy, induces a map $A_0(X, K_1) \to A_0(Y, K_1)$ which can be described explicitly.  We use the identification $A_0(X, K_1) \simeq G(A, \sigma)$ given above together with $$A_0(Y, K_1) \simeq\{(a, \lambda) \in K_1(A) \times F^{\times} \mid \text{Nrd}_A (a) = \lambda ^2\},$$ established in \cite{McFaddin}.  By \cite[Prop. 12.23]{MerkBook}, for any $g \in \text{GSp}(A, \sigma)$, we have the relation $\text{Nrd}_A(g) = \mu(g) ^2$. The assignment $\mu(g) \mapsto (\mu(g), g)$ gives the desired map. 
\end{rem}

\subsection*{Acknowledgements}  We would like to thank Danny Krashen for helpful comments and suggestions.  We would also like to thank the quick referee for their useful comments and for spotting an error in an earlier draft.

\end{document}